\documentclass[reqno,11pt]{amsart}
\usepackage{bbm}
\usepackage{mathrsfs}
\usepackage{amssymb}

\usepackage[usenames,dvipsnames]{xcolor}
\setlength{\oddsidemargin}{0mm} \setlength{\evensidemargin}{0mm}
\setlength{\topmargin}{-10mm} \setlength{\textheight}{220mm}
\setlength{\textwidth}{155mm}
\usepackage{amsthm}
\usepackage{amsmath}
\usepackage{amsfonts}
\usepackage{enumerate}
\usepackage{txfonts}
\usepackage{bm}

\usepackage{psfrag}

\usepackage{graphicx}

\numberwithin{equation}{section}

\newtheorem{theorem}{Theorem}[section]
\newtheorem{lemma}[theorem]{Lemma}
\newtheorem{corollary}[theorem]{Corollary}
\newtheorem{proposition}[theorem]{Proposition}

\newtheorem{remark}[theorem]{Remark}

\theoremstyle{definition}
\newtheorem{definition}[theorem]{Definition}
\newtheorem{assumption}[theorem]{Assumption}
\newtheorem{example}[theorem]{Example}

\begin{document}

\def\be{\begin{eqnarray}}
\def\ee{\end{eqnarray}}
\def\p{\partial}
\def\no{\nonumber}
\def\eps{\epsilon}
\def\de{\delta}
\def\De{\Delta}
\def\om{\omega}
\def\Om{\Omega}
\def\f{\frac}
\def\th{\theta}
\def\la{\lambda}
\def\lab{\label}
\def\b{\bigg}
\def\var{\varphi}
\def\na{\nabla}
\def\ka{\kappa}
\def\al{\alpha}
\def\La{\Lambda}
\def\ga{\gamma}
\def\Ga{\Gamma}
\def\ti{\tilde}
\def\wti{\widetilde}
\def\wh{\widehat}
\def\ol{\overline}
\def\ul{\underline}
\def\Th{\Theta}
\def\si{\sigma}
\def\Si{\Sigma}
\def\oo{\infty}
\def\q{\quad}
\def\z{\zeta}
\def\co{\coloneqq}
\def\eqq{\eqqcolon}
\def\bt{\begin{theorem}}
\def\et{\end{theorem}}
\def\bc{\begin{corollary}}
\def\ec{\end{corollary}}
\def\bl{\begin{lemma}}
\def\el{\end{lemma}}
\def\bp{\begin{proposition}}
\def\ep{\end{proposition}}
\def\br{\begin{remark}}
\def\er{\end{remark}}
\def\bd{\begin{definition}}
\def\ed{\end{definition}}
\def\bpf{\begin{proof}}
\def\epf{\end{proof}}
\def\bex{\begin{example}}
\def\eex{\end{example}}
\def\bq{\begin{question}}
\def\eq{\end{question}}
\def\bas{\begin{assumption}}
\def\eas{\end{assumption}}
\def\ber{\begin{exercise}}
\def\eer{\end{exercise}}
\def\mb{\mathbb}
\def\mbR{\mb{R}}
\def\mbZ{\mb{Z}}
\def\mc{\mathcal}
\def\mcS{\mc{S}}
\def\ms{\mathscr}
\def\lan{\langle}
\def\ran{\rangle}
\def\lb{\llbracket}
\def\rb{\rrbracket}

\title[Smooth helically symmetric transonic flows]{Smooth helically symmetric transonic flows with nonzero vorticity in a concentric cylinder}

\author{Yi KE}
\address{School of mathematics and statistics, Wuhan University, Wuhan, Hubei Province, China, 430072.}
\email{keycloud@whu.edu.cn}

\author{Shangkun WENG}
\address{School of mathematics and statistics, Wuhan University, Wuhan, Hubei Province, China, 430072.}
\email{skweng@whu.edu.cn}

\keywords{Transonic flows, helical symmetry, step, deformation-curl decomposition, elliptic-hyperbolic mixed.}
\subjclass[2010]{35M12,76H05,76N10,35L67.}
\date{}
\maketitle


\begin{abstract}
  This paper concerns the structural stability of smooth cylindrical symmetric transonic flows in a concentric cylinder under helically symmetric perturbation of suitable boundary conditions. The deformation-curl decomposition developed by the second author and his collaborator is utilized to effectively decouple the elliptic-hyperbolic mixed structure in the steady compressible Euler equation. A key parameter in the helical symmetry is the step (denoted by $\sigma$), which denotes the magnitude of the translation along the symmetry axis after rotating one full turn. It is shown that the step determines the type of the first order partial differential system satisfied by the radial and vertical velocity. There exists a critical number $\sigma_{*}$ depending only on the background transonic flows, such that if $0<\sigma<\sigma_{*}$, one can prove the existence and uniqueness of smooth helically symmetric transonic flows with nonzero vorticity.
\end{abstract}

\section{Introduction and main results}

In this paper, we are interested in the structural stability of some smooth transonic spiral flow in a concentric cylinder under helical symmetric perturbations of suitable boundary conditions. The inviscid compressible spiral flows in a concentric cylinder $\Omega=\{(x_1,x_2,x_3): r_0<r=\sqrt{x_1^2+x_2^2}<r_1,x_3\in \mathbb{R} \}$ are governed by the following steady compressible Euler system
\begin{eqnarray}\label{1.1}
\begin{cases}
\p_{x_1}(\rho u_1)+\p_{x_2}(\rho u_2)+\p_{x_3}(\rho u_3)=0,\\
\p_{x_1}(\rho u_1^2)+\p_{x_2}(\rho u_1 u_2)+\p_{x_3}(\rho u_1 u_3)+\p_{x_1} p=0,\\
\p_{x_1}(\rho u_1 u_2)+\p_{x_2}(\rho u_2^2)+\p_{x_3}(\rho u_2 u_3)+\p_{x_2} p=0,\\
\p_{x_1}(\rho u_1 u_3)+\p_{x_2}(\rho u_2 u_3)+\p_{x_3}(\rho u_3^2)+\p_{x_3} p=0,\\
\sum\limits_{j=1}^{3}\p_{x_j}(\rho u_j(e+\frac12|{\bf u}|^2)+u_j p)=0,\\
\end{cases}
\end{eqnarray}
where $\bf u$$=(u_1,u_2,u_3)^t$ is the velocity, $\rho$ is the density, $p$ is the pressure, $e$ is the internal energy.
 Here we consider only the polytropic gas, therefore $p=A(S) \rho^\gamma $, $\gamma>1$, where $A (S)=a e^{\frac{S}{ c_v}}$, and $a$, $ c_v$ are positive constants, $e=\frac{p}{(\gamma-1) \rho}$. For later use, we denote the local sonic speed and the Bernoulli's function by
\begin{equation*}
c(\rho, A)=\sqrt{A\gamma} \rho^{\frac{\gamma-1}{2}},\ \ \ B=\frac{1}{2}\left|\bf u\right|^2+\frac{\gamma p}{(\gamma-1)\rho}.\\
\end{equation*}

Introduce the cylindrical coordinates $(r,\theta,z)$
 \begin{eqnarray}
r=\sqrt{x_1^2+x_2^2},\theta=\arctan \frac{x_2}{x_1},z=x_3\notag
\end{eqnarray}
and decompose the velocity as ${\bf u}= U_1 {\bf e}_r + U_2 {\bf e}_{\theta}+ U_3 {\bf e}_3$ with
\begin{eqnarray}
{\bf e}_r=\begin{pmatrix} \cos\theta\\ \sin\theta\\ 0\end{pmatrix},{\bf e}_{\theta}=\begin{pmatrix} -\sin\theta\\ \cos\theta\\ 0\end{pmatrix},{\bf e}_3=\begin{pmatrix}0\\ 0\\ 1\end{pmatrix}\notag
\end{eqnarray}
Then the previous system can be rewritten as
\begin{eqnarray}\label{1.2}
\begin{cases}
\p_{r}(\rho U_1)+\frac{1}{r} \p_{\theta}(\rho U_2)+\frac{1}{r} \rho U_1 +\p_{3}(\rho U_3)=0,\\
(U_1 \p_{r}+\frac{U_2}{r} \p_{\theta}+U_3 \p_{3}) U_1+\frac{1}{\rho} \p_{r} p-\frac{U_2^2}{r}=0,\\
(U_1 \p_{r}+\frac{U_2}{r} \p_{\theta}+U_3 \p_{3}) U_2+\frac{1}{r \rho} \p_{\theta} p+\frac{U_1 U_2}{r}=0,\\
(U_1 \p_{r}+\frac{U_2}{r} \p_{\theta}+U_3 \p_{3}) U_3+\frac{1}{\rho} \p_{3} p=0,\\
(U_1 \p_{r}+\frac{U_2}{r} \p_{\theta}+U_3 \p_{3}) A=0.\\
\end{cases}
\end{eqnarray}

The authors in \cite{WX21} had investigated the cylindrical symmetric smooth transonic spiral flows moving from the outer into the inner cylinders. They are described by smooth functions of the form ${\bf u}= \bar{U}_1(r) {\bf e}_r +\bar{U}_2(r){\bf e}_{\theta}$, $\rho(x)=\bar{\rho}(r)$, $p(x)=\bar{p}(r)$ and $A(x)=\bar{A}(r)$, solving the following system
\begin{eqnarray}\label{1.3}
\begin{cases}
\frac{d}{dr}(\bar{\rho} \bar{U}_1)+\frac{1}{r} \bar{\rho} \bar{U}_1=0, &0<r_0<r<r_1,\\
\bar{U}_1 \bar{U}_{1}'+\frac{1}{\rho}\frac{d}{dr}\bar{p}-\frac{\bar{U}_2^2}{r}=0,&0<r_0<r<r_1,\\
\bar{U}_1 \bar{U}_{2}'+\frac{\bar{U}_1 \bar{U}_2}{r}=0,&0<r_0<r<r_1,\\
\bar{U}_1 \bar{A}'=0,&0<r_0<r<r_1\\
\end{cases}
\end{eqnarray}
with the boundary conditions at the outer cylinder $r=r_1$:
\begin{equation}\label{1.4}
\rho(r_1)=\rho_0>0,U_1(r_1)=U_{10} \leq 0,U_2(r_1)=U_{20} \neq 0,A(r_1)=A_0>0.
\end{equation}
Let $B_0=\frac{1}{2}(U_{10}^{2}+U_{20}^{2})+\frac{\gamma}{\gamma-1}A_0 \rho_{0}^{\gamma-1}$ and the following proposition had been established in \cite{WXY21}.
\begin{proposition}\label{p11}
Suppose that the incoming flow is subsonic, i.e. $A_0\gamma \rho_{0}^{\gamma-1}>U_{10}^{2}+U_{20}^{2}$. Then there exist constants $0<r^{\#}<r_c<r_1$, where $r^{\#}$ depends only on $r_1$, $\gamma$ and the incoming flow at $r_1$,
\begin{equation*}
r_c=\sqrt{\frac{(\gamma+1)(\kappa_1^2+\kappa_2^2\rho_{c}^{2})}{2(\gamma-1)B_0 \rho_{c}^{2}}},\rho_c=(\frac{2(\gamma-1)B_0}{(\gamma+1)\gamma A_0})^{\frac{1}{\gamma-1}},\kappa_1=r_1 \rho_0 U_{10},\kappa_2=r_1 U_{20},
\end{equation*}
such that if $r^{\#}<r_0<r_c$, there exists a unique smooth irrotational transonic spiral flow $(\bar{U}_1,\bar{U}_2,\bar{\rho},\bar{A})$ to \eqref{1.3}-\eqref{1.4} in $[r_0,r_1]$ with all sonic points located at the cylinder $r=r_c \in (r_0,r_1)$, which moves from the outer cylinder to the inner one. Furthermore, all the sonic points are nonexceptional and noncharacteristically degenerate.
\end{proposition}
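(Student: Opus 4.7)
My plan is to exploit the cylindrical symmetry to reduce \eqref{1.3} to a single scalar algebraic equation for $\bar\rho$. First I would integrate the conservation laws: when $\bar U_1\not\equiv 0$, the fourth equation of \eqref{1.3} gives $\bar A\equiv A_0$, the third integrates to $r\bar U_2\equiv\kappa_2$, and the first to $r\bar\rho\bar U_1\equiv\kappa_1$. Substituting these identities into the radial momentum equation and eliminating the pressure via $\bar p=A_0\bar\rho^\gamma$ produces the Bernoulli relation $\tfrac12(\bar U_1^2+\bar U_2^2)+\tfrac{\gamma A_0}{\gamma-1}\bar\rho^{\gamma-1}=B_0$, after which the four conservation laws collapse to the single implicit equation $F(r,\bar\rho(r))=0$ with
\[
F(r,\rho)\co\frac{\kappa_1^2}{2r^2\rho^2}+\frac{\kappa_2^2}{2r^2}+\frac{\gamma A_0}{\gamma-1}\rho^{\gamma-1}-B_0;
\]
the entire background $(\bar U_1,\bar U_2,\bar\rho,\bar A)$ is then determined by the choice of root $\bar\rho(r)$.

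Next I would analyze the branch structure of $F$. For each fixed $r>0$ the map $\rho\mapsto F(r,\rho)$ is strictly convex with a unique minimizer $\rho^*(r)$ characterized by the radial sonic condition $|\bar U_1|=c(\bar\rho,A_0)$, so $F(r,\cdot)=0$ admits two, one, or no positive roots according as $F(r,\rho^*(r))$ is negative, zero, or positive. Since $\partial_r F<0$, the map $r\mapsto F(r,\rho^*(r))$ is strictly decreasing from $+\infty$ (as $r\to 0^+$) to $-B_0$ (as $r\to\infty$), so a unique critical radius $r^{\#}$ exists at which the branches $\rho_-(r)<\rho^*(r)<\rho_+(r)$ collide. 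For $r>r^{\#}$ both branches are $C^\infty$ by the implicit function theorem, and the incoming subsonicity places $\rho_0$ on the upper branch and gives $r_1>r^{\#}$. I would then define $\bar\rho(r)\co\rho_+(r)$.

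The total sonic condition $|\bar{\mathbf u}|^2=c^2$ combined with Bernoulli pins the sonic density to the constant $\rho_c=\bigl(\tfrac{2(\gamma-1)B_0}{(\gamma+1)\gamma A_0}\bigr)^{1/(\gamma-1)}$, and inserting $\rho=\rho_c$ into $F(r,\rho_c)=0$ gives the stated formula for $r_c^2$. A direct calculation gives $\rho^*(r_c)<\rho_c$, so $r^{\#}<r_c<r_1$; since $\rho_+$ is strictly monotone in $r$ (from $\rho_+'=-\partial_r F/\partial_\rho F>0$), the upper branch crosses $\rho_c$ exactly once, precisely at $r=r_c$. The flow is therefore smooth and transonic on $[r_0,r_1]$ for every $r^{\#}<r_0<r_c$: subsonic on $(r_c,r_1]$ and supersonic on $[r_0,r_c)$. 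Uniqueness is immediate, since $\bar\rho(r_1)=\rho_0$ forces the upper branch. Nonexceptionality and noncharacteristic degeneracy at $r_c$ reduce in this cylindrical setting to $\tfrac{d}{dr}(c^2-|\bar{\mathbf u}|^2)\big|_{r=r_c}\neq 0$, which I would verify by a direct computation from the explicit formulas together with $\rho_+'(r_c)>0$.

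The main obstacle I anticipate is the breakdown of the implicit function theorem at $r=r^{\#}$, where $\partial_\rho F$ vanishes and the two branches merge tangentially: this is what prevents extending the background below $r^{\#}$ and what forces the admissible domain to $r_0>r^{\#}$. Quantifying $r^{\#}$ and verifying that it depends only on $(r_1,\gamma,\rho_0,U_{10},U_{20},A_0)$ requires careful control of the monotonicity of $F(r,\rho^*(r))$; once this is in place the rest of the construction is essentially explicit.
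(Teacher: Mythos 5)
This proposition is not proved in the paper at all: it is imported verbatim from \cite{WXY21} (``the following proposition had been established in \cite{WXY21}''), so there is no internal proof to compare against. Your reduction --- integrate the conservation laws to get $\bar A\equiv A_0$, $r\bar U_2\equiv\kappa_2$, $r\bar\rho\bar U_1\equiv\kappa_1$ and Bernoulli, collapse everything to the scalar relation $F(r,\rho)=0$, and read the flow off the radially subsonic (upper) branch --- is exactly the standard construction used for such radially symmetric flows, and the main steps are sound: the envelope argument $\frac{d}{dr}F(r,\rho^*(r))=\partial_rF<0$ does produce a unique $r^{\#}$, subsonicity of the incoming data does place $\rho_0$ on the upper branch with $r_1>r^{\#}$, and $F(r_c,\rho_c)=0$ with $c^2(\rho_c)=\frac{2(\gamma-1)}{\gamma+1}B_0$ reproduces the stated formulas for $\rho_c$ and $r_c$. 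Note that the transonic transition here is in the \emph{total} Mach number while the flow never leaves the radially subsonic branch ($\bar M_1^2<1$ throughout, consistent with \eqref{2.1}); your construction respects this.

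Three points need tightening. First, $F(r,\cdot)$ is \emph{not} strictly convex when $1<\gamma<2$ (the term $\gamma(\gamma-2)A_0\rho^{\gamma-3}$ in $\partial_\rho^2F$ is negative); what is true, and all you need, is that $\partial_\rho F=\rho^{-3}\bigl(\gamma A_0\rho^{\gamma+1}-\kappa_1^2/r^2\bigr)$ changes sign exactly once, so $F(r,\cdot)$ is strictly unimodal with the same two/one/zero root trichotomy. Second, the admissible case $U_{10}=0$ degenerates your picture: $\kappa_1=0$, $\rho^*\equiv0$, there is only one branch, and $r^{\#}=|\kappa_2|/\sqrt{2B_0}$ comes from positivity of $B_0-\kappa_2^2/(2r^2)$ rather than from branch collision; also $\bar U_2$ and $\bar A$ are then not determined by \eqref{1.3} alone but by the irrotationality requirement, so this case deserves a separate sentence. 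Third, the claim that nonexceptionality and noncharacteristic degeneracy reduce to $\frac{d}{dr}(c^2-|\bar{\bf u}|^2)|_{r=r_c}\neq0$ is asserted rather than derived; since these notions are defined relative to the sonic curve of the associated potential/stream-function formulation, you would need to recall those definitions and actually carry out the computation (it follows from $(|\bar{\bf M}|^2)'(r_c)=-\frac{\gamma+1}{r_c(1-\bar M_1^2)}\neq0$, cf. \eqref{2.1}) before the last sentence of the proposition can be considered proved.
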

\par To be specific, if $U_{10} =0$, then $U_1\equiv0$, $A(r)\equiv A_0$, $U_2 (r)=\frac{\kappa_2}{r}$ and
\begin{equation*}
\rho_c=\bigg(\frac{\gamma-1}{A_0 \gamma}\bigg)^{\frac{1}{\gamma-1}} \bigg(B_0-\frac{\kappa_2^2}{2 r^2 }\bigg)^{\frac{1}{\gamma-1}},\enspace r^{\#}=\frac{\left| \kappa_2\right| }{\sqrt{2 B_0}}.
\end{equation*}
All the sonic points locate at the cylinder $r=r_c=\sqrt{\frac{\gamma+1}{2(\gamma-1)B_0 }}\left| \kappa_2\right|$. If $U_{10} <0$, then $U_1 (r)<0$ for any $r\in [r_0,r_1]$. Fix $r_0 \in (r^{\#},r_c)$, we call this smooth transonic spiral flow obtained in Proposition \ref{p11} on $[r_0,r_1]$ to be the background flow.

This paper will concentrate on the structural stability of the smooth transonic spiral flows under helical symmetric perturbations of suitable boundary conditions. Let us first describe the definition of helical symmetry. Denote the rotation transformation around the $x_3$ axis by
\begin{equation}\label{1.5}
R_{\theta }=
\begin{pmatrix}
\cos\theta  & \sin\theta  & 0 \\
 -\sin\theta & \cos\theta  & 0 \\
 0& 0 &  1\\
\end{pmatrix},\enspace\forall \theta \in \mathbb{R}.
\end{equation}
With this notation, we define the action of the helical group of transformations $G^{\sigma}$ on $\mathbb{R}^3$ by
\begin{equation}\label{1.6}
S_{\theta,\sigma}(x)=R_{\theta }(x)+
\begin{pmatrix}
 0\\
 0\\
\frac{\sigma }{2\pi }\theta
\end{pmatrix}=
\begin{pmatrix}
x_1\cos\theta +x_2\sin\theta  \\
 -x_1\sin\theta +x_2\cos\theta \\
x_3+\frac{\sigma }{2\pi }\theta
\end{pmatrix} ,\enspace\forall \theta \in \mathbb{R}.
\end{equation}
Then we say that the smooth function $f(x)$ is helically symmetric, if $f$ is invariant under the action of $G^{\sigma}$, i.e., $f(S_{\theta,\sigma}(x))=f(x)$ for all $\theta\in\mathbb{R}$. Similarly, we say that the smooth vector field ${\bf u}(x)$ is helical, if it is covariant with respect to the action of $G^{\sigma}$, i.e., $R_{\theta }({\bf u}(x))={\bf u}(S_{\theta,\sigma}(x))$ for all $\theta\in\mathbb{R}$.
\par It follows from the definitions of helical function and vector field that they are $\sigma$-periodic in $x_3$ variable. A domain $\Omega \subset \mathbb{R}^2 \times \mathbb{T}_{\sigma}$ is called a helical domain if $\forall x \in \Omega$, $S_{\theta,\sigma}(x) \in \Omega$ for any $\theta \in \mathbb{R}$. Here the corresponding $1$-dimensional torus can be denoted by $\mathbb{T}_{\sigma}=\mathbb{R}/\sigma \mathbb{Z}$.
\par To our purpose, we need to use another equivalent definition of the helical symmetry. Introduce new variables:
\begin{eqnarray}
\eta=\frac{\sigma}{2 \pi}\theta+z,\enspace \xi=\frac{\sigma}{2 \pi}\theta-z,\notag
\end{eqnarray}
Then any functions $f=f(r,\theta,z)$ can be expressed as a function in terms of $(r,\eta,\xi)$. We have the following proposition, which is proved in \cite{MTL90}.

\begin{proposition}
A smooth function $f=f(r,\theta,z)$ is a helical function if and only if, when expressed in the $(r,\xi,\eta)$ variables, it is independent of $\xi$, i.e. $f=f_h(r,\frac{\sigma}{2\pi}\theta+z)$, for some $f_h=f_h(r,\eta)$. Similarly, a smooth vector field ${\bf u}= U_1 {\bf e}_r + U_2 {\bf e}_{\theta}+ U_3 {\bf e}_3$ is helical if and only if there exist $V_1$,$V_2$,$V_3$ functions of $(r,\eta)$ such that $U_j(r,\theta,z)=V_j(r,\eta),j=1,2,3$, while the helical vector field ${\bf u}_h= V_1 {\bf e}_r + V_2 {\bf e}_{\theta}+ V_3 {\bf e}_3$. Furthermore, both the helical function $f_h$ and vector field ${\bf u}_h$ are periodic in $\eta$ with period $\sigma$.
\end{proposition}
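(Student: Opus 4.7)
The plan is to reduce the proposition to coordinate-change bookkeeping for the one-parameter group $\{S_{\alpha,\sigma}\}_{\alpha\in\mathbb{R}}$ (I rename the group parameter to $\alpha$ to separate it from the angular coordinate $\theta$). A direct computation from the matrix \eqref{1.5} shows that $R_{\alpha}$ sends the point with cylindrical data $(r,\theta,z)$ to $(r,\theta-\alpha,z)$, so by \eqref{1.6} the helical action in cylindrical coordinates is $S_{\alpha,\sigma}:(r,\theta,z)\mapsto(r,\theta-\alpha,z+\tfrac{\sigma}{2\pi}\alpha)$, and helical invariance of a scalar $f$ becomes
\[
f\left(r,\theta-\alpha,z+\tfrac{\sigma}{2\pi}\alpha\right)=f(r,\theta,z)\qquad\text{for all }\alpha\in\mathbb{R}.
\]

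Next I would plug the transformation $(\theta,z)\mapsto(\theta-\alpha,z+\tfrac{\sigma}{2\pi}\alpha)$ into the definitions of $\eta$ and $\xi$: the two contributions in $\eta=\tfrac{\sigma}{2\pi}\theta+z$ cancel so $\eta$ is invariant, while $\xi=\tfrac{\sigma}{2\pi}\theta-z$ becomes $\xi-\tfrac{\sigma}{\pi}\alpha$. Writing $f(r,\theta,z)=\widetilde{f}(r,\xi,\eta)$, the helical invariance collapses to $\widetilde{f}(r,\xi-\tfrac{\sigma}{\pi}\alpha,\eta)=\widetilde{f}(r,\xi,\eta)$ for every $\alpha\in\mathbb{R}$. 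Since $\sigma\neq 0$, the shifts $-\tfrac{\sigma}{\pi}\alpha$ sweep all of $\mathbb{R}$, so this is equivalent to $\p_{\xi}\widetilde{f}\equiv 0$; hence $f=f_h(r,\eta)$, which proves the scalar equivalence in both directions.

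For the vector field part, the crucial identities are $R_{\alpha}{\bf e}_r|_{\theta}={\bf e}_r|_{\theta-\alpha}$, $R_{\alpha}{\bf e}_{\theta}|_{\theta}={\bf e}_{\theta}|_{\theta-\alpha}$, and $R_{\alpha}{\bf e}_3={\bf e}_3$, each verified by multiplying \eqref{1.5} against the explicit expressions for the basis vectors. Substituting ${\bf u}=U_1{\bf e}_r+U_2{\bf e}_{\theta}+U_3{\bf e}_3$ into the covariance relation $R_{\alpha}{\bf u}(x)={\bf u}(S_{\alpha,\sigma}(x))$ and using the linear independence of $\{{\bf e}_r|_{\theta-\alpha},{\bf e}_{\theta}|_{\theta-\alpha},{\bf e}_3\}$ at the image point, the vector equation decouples into the three scalar relations $U_j(r,\theta,z)=U_j(r,\theta-\alpha,z+\tfrac{\sigma}{2\pi}\alpha)$ for $j=1,2,3$. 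Each component is therefore a helical scalar, and the scalar half applied componentwise produces functions $V_j=V_j(r,\eta)$.

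Finally, the $\sigma$-periodicity of $f_h$ (and of each $V_j$) in $\eta$ follows by specializing to $\alpha=2\pi$: then $\theta-2\pi\equiv\theta$ as an angular coordinate, so the invariance reduces to $f_h(r,\eta)=f_h(r,\eta+\sigma)$. There is no substantial obstacle in this argument, which is essentially a reinterpretation of the infinitesimal generator $\p_{\theta}+\tfrac{\sigma}{2\pi}\p_{z}$ of the helical group and its obvious invariants $(r,\eta)$; the only point of care is the sign convention built into \eqref{1.5}, which makes $R_{\alpha}$ rotate the angular coordinate by $-\alpha$ and thereby forces $\eta$, rather than $\xi$, to be the invariant.
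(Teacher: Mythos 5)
Your argument is correct and complete. Note, however, that the paper itself does not prove this statement: it is recorded as Proposition 1.2 and attributed to \cite{MTL90} without proof, so there is no in-paper argument to compare against. What you supply is precisely the standard verification that the reference relies on: $R_{\alpha}$ shifts the angular coordinate by $-\alpha$, hence $S_{\alpha,\sigma}$ acts by $(r,\theta,z)\mapsto(r,\theta-\alpha,z+\tfrac{\sigma}{2\pi}\alpha)$, under which $\eta$ is invariant and $\xi$ is translated by $-\tfrac{\sigma}{\pi}\alpha$; since these translations exhaust $\mathbb{R}$, invariance is equivalent to $\partial_{\xi}\widetilde f\equiv 0$. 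The vector case reduces to the scalar case because $R_{\alpha}$ carries the moving frame at $\theta$ to the frame at $\theta-\alpha$, so covariance decouples componentwise. The one point worth making explicit is that the $(\xi,\eta)$ change of variables requires lifting $\theta$ to a real variable, and the final $\sigma$-periodicity of $f_h$ in $\eta$ uses the $2\pi$-periodicity of $f$ in $\theta$ (single-valuedness on the cylinder) together with the specialization $\alpha=2\pi$; you handle this correctly, and it is also what guarantees that $f_h(r,\tfrac{\sigma}{2\pi}\theta+z)$ is well defined on the original domain. Your closing remark identifying $(r,\eta)$ as the invariants of the generator $\partial_{\theta}+\tfrac{\sigma}{2\pi}\partial_{z}$ is a clean way to summarize the whole proposition.
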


Our goal is to find helically symmetric smooth solutions $({\bf u},\rho,p,A)$ to \eqref{1.2} with the form
\begin{eqnarray}\label{1.7}
{\bf u} = V_1 (r,\eta) {\bf e}_r +V_2 (r,\eta) {\bf e}_{\theta}+V_3 (r,\eta) {\bf e}_3,\enspace \rho=\rho_h (r,\eta),\enspace p=p_h (r,\eta),\enspace A=A_h (r,\eta).
\end{eqnarray}

Substituting \eqref{1.7} into \eqref{1.2}, one obtain
\begin{eqnarray}\label{1.8}
\begin{cases}
\p_{r}(\rho V_1)+\frac{\sigma}{2 \pi r} \p_{\eta}(\rho V_2)+\frac{1}{r} \rho V_1 +\p_{\eta}(\rho V_3)=0,\\
(V_1 \p_{r}+(\frac{\sigma}{2 \pi r} V_2+V_3) \p_{\eta}) V_1+\frac{1}{\rho} \p_{r} p-\frac{V_2^2}{r}=0,\\
(V_1 \p_{r}+(\frac{\sigma}{2 \pi r} V_2+V_3) \p_{\eta}) V_2+\frac{\sigma}{2 \pi r \rho} \p_{\eta} p+\frac{V_1 V_2}{r}=0,\\
(V_1 \p_{r}+(\frac{\sigma}{2 \pi r} V_2+V_3) \p_{\eta}) V_3+\frac{1}{\rho} \p_{\eta} p=0,\\
(V_1 \p_{r}+(\frac{\sigma}{2 \pi r} V_2+V_3) \p_{\eta}) A=0.
\end{cases}
\end{eqnarray}
To simplify the symbols, we still use $\rho,p,A$ in place of $\rho_h,p_h,A_h$. It is easy to see that the quantity $V_c$ defined by
\begin{equation}\label{1.9}
V_c= rV_2-\frac{\sigma}{2 \pi} V_3\\
\end{equation}
is conserved along the tracjectory:
\begin{equation}\label{1.10}
(V_1 \p_{r}+(\frac{\sigma}{2 \pi r} V_2+V_3) \p_{\eta}) V_c= 0.\\
\end{equation}

It is well-known that the steady Euler system \eqref{1.2} is hyperbolic in supersonic region, changes its type when the fluid moves across the sonic state and becomes elliptic-hyperbolic coupled in subsonic region. As for the system \eqref{1.8}, it turns out that the parameter $\sigma$ plays a key role in determining the types of the system \eqref{1.8}. To describe it clearly, denote the Mach number of the background flows:
\begin{equation*}
\bar{M}_1(r)=\frac{\bar{U}_1(r)}{c(\bar{\rho},\bar{A})},\enspace \bar{M}_2(r)=\frac{\bar{U}_2(r)}{c(\bar{\rho},\bar{A})},\enspace \bar{{\bf M}} (r)=(\bar{M}_1(r),\bar{M}_2(r))^t.
\end{equation*}
Define a critical number $\sigma_{*}>0$ as follow,
\begin{equation}\label{1.11}
\sigma_{*}=\underset{y_1 \in (r_c,r_1)}{min} 2 \pi y_1 \sqrt{\frac{1-\bar{M}_{1}^2(y_1)}{\left|\bar{{\bf M}}(y_1) \right|^2-1}}.\\
\end{equation}
For any $\sigma \in (0,\sigma_{*})$, we can formulate a class of boundary conditions to \eqref{1.8} so that a well-posedness Theory for \eqref{1.8} can be established. Consider the domain $\mathbb{D}_{\sigma}=(r_0,r_1)\times \mathbb{T}_{\sigma}$, where $\mathbb{T}_{\sigma}$ is 1 dimensional torus with period $\sigma$. On the outer surface $\left\{ (r_1,\eta): \eta \in \mathbb{T}_{\sigma}\right\}$:
\begin{eqnarray}\label{1.12}
\begin{cases}
V_c (r_1,\eta)=\kappa_2  +\varepsilon  q_c (\eta),\ \ \ \ V_3 (r_1,\eta)=\varepsilon  q_3 (\eta),\\
A (r_1,\eta)=\bar{A}+\varepsilon  \widetilde{A}_1 (\eta),B (r_1,\eta)=\bar{B}+\varepsilon  \widetilde{B}_1 (\eta),\\
\int_{0}^{\sigma } q_3 (\eta) \mathrm{d} \eta =0,
\end{cases}
\end{eqnarray}
with $q_c (\eta),q_3 (\eta),\widetilde{A}_1 (\eta),\widetilde{B}_1 (\eta) \in C^{2,\alpha }(\mathbb{T}_{\sigma})$ and $\varepsilon$ small enough. On the inner  surface $\left\{ (r_0,\eta): \eta \in \mathbb{T}_{\sigma}\right\}$ is
\begin{equation}\label{1.13}
V_1 (r_0,\eta)=\bar{U}_1(r_0)+\varepsilon  q_1 (\eta),
\end{equation}
with $q_1 (\eta) \in C^{2,\alpha }(\mathbb{T}_{\sigma})$.
\par Then we can derive the following theorem on the existence and uniqueness of smooth helical symmetric transonic spiral flows with small nonzero vorticity holds.
\begin{theorem}

Given a nonzero background flow with the radial velocity $\bar{U}_1\neq  0$, for any smooth $C^{2,\alpha }(\mathbb{T}_{\sigma})$ functions $\widetilde{A}_1 (\eta),\widetilde{B}_1 (\eta), q_c$ and $q_i,i=1,3$ with period $\sigma$, there exists a small $\varepsilon_0$ and constants $C$ depending only on background flow and boundary values, such that if $0<\varepsilon<\varepsilon_0$ and $0<\sigma < \sigma_{*}$ ,there exists a unique smooth transonic flow with nonzero vorticity
\begin{eqnarray}
{\bf u}= V_1(r,\eta) {\bf e}_r + V_2 (r,\eta) {\bf e}_{\theta}+ V_3 (r,\eta) {\bf e}_3,A(x)=A(r,\eta), B(x)=B(r,\eta)\notag
\end{eqnarray}
to \eqref{1.8} with \eqref{1.12} and \eqref{1.13}, and the following estimate holds
\begin{equation*}
\left\|V_1-\bar{U}_1\right\|_{C^{2,\alpha} (\overline{\mathbb{D}})} +\left\| V_2 -\frac{\kappa_2}{r}\right\|_{C^{2,\alpha} (\overline{\mathbb{D}})} +\left\|V_3\right\|_{C^{2,\alpha} (\overline{\mathbb{D}})}+ \left\| B-\bar{B}\right\|_{C^{2,\alpha} (\overline{\mathbb{D}})} + \left\| A-\bar{A}\right\|_{C^{2,\alpha} (\overline{\mathbb{D}})} \leq C \varepsilon.\quad(1.14)
\end{equation*}

\end{theorem}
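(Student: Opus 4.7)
The plan is to apply the deformation-curl decomposition to the helically symmetric system \eqref{1.8}. Observe first that $A$, $B$, and the conserved quantity $V_c$ from \eqref{1.9} are all transported along the helical streamline vector field $V_1 \partial_r + (\frac{\sigma}{2\pi r}V_2 + V_3)\partial_\eta$. Since the background satisfies $\bar U_1 < 0$, the trajectories of the perturbed flow still enter the annulus through the outer cylinder $r=r_1$, so these three quantities are determined on the whole domain by the outflow data in \eqref{1.12} and the method of characteristics, once the velocity is known. The density $\rho$ is then recovered algebraically from $A$, $B$, and $|\mathbf{u}|^2$ via Bernoulli's relation.

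The velocity is controlled through a deformation-curl system: mass conservation gives a divergence-type equation on $\rho\mathbf{u}$, while the Crocco-Vazsonyi identity $\mathbf{u}\times(\nabla\times\mathbf{u})=\nabla B - T\,\nabla S$ expresses the vorticity algebraically in terms of gradients of $A$, $B$, and $V_c$ together with $\mathbf{u}$. Using helical symmetry to eliminate the $z$-dependence, and recovering $V_2$ from $V_c$ and $V_3$ via \eqref{1.9}, one obtains a closed first-order system for $(V_1,V_3)$ on $(r_0,r_1)\times\mathbb{T}_\sigma$. Linearizing around the background and taking Fourier series in $\eta$, the principal symbol becomes a quadratic in the Fourier dual whose discriminant controls the type. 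A direct computation shows that the discriminant has the desired sign, giving the correct (elliptic or mixed) structure of the linearized system across the two regimes, precisely when $\sigma<\sigma_*$ as defined in \eqref{1.11}.

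The main construction is a Picard iteration. Starting from the background and working in a small $C^{2,\alpha}$-ball of perturbations, at each step one freezes the nonlinear coefficients, transports to obtain updated $(A, B, V_c)$ from \eqref{1.12}, solves the reduced linear first-order system for $(V_1, V_3)$ with inflow condition \eqref{1.13} at $r=r_0$ together with the data on $V_3$ and $V_c$ from \eqref{1.12} at $r=r_1$, and recovers $V_2$ and $\rho$ algebraically. The mean-zero compatibility $\int_0^\sigma q_3 \,d\eta = 0$ is exactly what guarantees solvability of the zero Fourier mode in $\eta$ for the reduced system. Uniform $C^{2,\alpha}$ estimates on the iterates together with a contraction estimate on their differences for $\varepsilon<\varepsilon_0$ sufficiently small then yield a unique fixed point satisfying \eqref{1.14}.

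The hardest step is the linear theory for the reduced first-order system across the sonic cylinder $r=r_c$, where the principal symbol degenerates. In the subsonic annulus $(r_c,r_1)\times\mathbb{T}_\sigma$ one obtains an elliptic boundary value problem that is uniformly elliptic over all nonzero Fourier modes in $\eta$ thanks to $\sigma<\sigma_*$; in the supersonic annulus $(r_0,r_c)\times\mathbb{T}_\sigma$ the reduced system is symmetric hyperbolic with data on $r=r_c$ and is solved by characteristics and energy estimates. Matching the two pieces is made possible by Proposition \ref{p11}: the nonexceptional and noncharacteristically degenerate structure of the background sonic points supplies weighted $C^{2,\alpha}$-estimates that stay uniform as $r\to r_c^\pm$, allowing regularity to be propagated through $r=r_c$ and the iteration to close.
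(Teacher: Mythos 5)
Your overall skeleton (transport of $A$, $B$, $V_c$ along streamlines emanating from $r=r_1$, a deformation--curl reduction to a first order system for $(V_1,V_3)$, recovery of $V_2$ and $\rho$ algebraically, and a Picard iteration in a small $C^{2,\alpha}$ ball) matches the paper. But your final paragraph rests on a misreading of what $\sigma_*$ does, and the argument you propose there would not close. The point of $0<\sigma<\sigma_*$ is \emph{not} that the reduced system is elliptic in the subsonic annulus and hyperbolic in the supersonic one; it is that the reduced system for $(V_1,V_3)$ is \emph{uniformly elliptic on the entire annulus} $[r_0,r_1]$, with no degeneracy at the sonic cylinder $r=r_c$. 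After removing the inhomogeneous curl and writing $\Phi_1=\p_r\phi$, $\Phi_3=\p_\eta\phi$, the potential satisfies a second order equation whose type is governed by $\bar A_{11}\bar A_{33}-\bar A_{13}^2=\bar A_{11}^2\,\bar k_{22}$, where by Proposition \ref{p21}
\[
\bar k_{22}(r)=\frac{1}{1-\bar M_1^2}+\frac{\sigma^2}{4\pi^2 r^2}\,\frac{1-|\bar{\mathbf M}|^2}{(1-\bar M_1^2)^2}.
\]
Since the background is radially subsonic ($\bar M_1^2<1$ on all of $[r_0,r_1]$), the first term is positive everywhere; the second term is negative only where the total Mach number exceeds one, and $\sigma<\sigma_*$ as in \eqref{1.11} is precisely the condition that keeps $\bar k_{22}>0$ there as well, while at $r=r_c$ the second term vanishes and $\bar k_{22}=1/(1-\bar M_1^2)>0$. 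So there is no type change, no degenerate sonic line for the linearized velocity system, and no matching problem: one solves a single uniformly elliptic boundary value problem on all of $(r_0,r_1)\times\mathbb{T}_\sigma$ (Neumann-type data at $r_0$ from $q_1$, Dirichlet data at $r_1$ built from $q_3$, periodic in $\eta$) by the maximum principle with a barrier plus Schauder estimates.

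By contrast, the construction you sketch --- elliptic theory on $(r_c,r_1)$, a ``symmetric hyperbolic'' system on $(r_0,r_c)$ solved ``with data on $r=r_c$'', and a matching via weighted estimates uniform as $r\to r_c^{\pm}$ --- is not a proof. First, no data is prescribed at $r=r_c$: the boundary conditions \eqref{1.12}--\eqref{1.13} put $V_1$ at $r_0$ and $V_3$ at $r_1$, so your hyperbolic piece is not well posed as stated, and the global problem is genuinely a two-point boundary value problem in $r$, which is incompatible with a hyperbolic region. Second, gluing an elliptic and a hyperbolic region across a line where the principal symbol degenerates is a Tricomi/Keldysh-type mixed problem; the nonexceptionality of the background sonic points in Proposition \ref{p11} concerns the full potential-flow Mach number and does not by itself supply the uniform weighted $C^{2,\alpha}$ estimates you invoke. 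The repair is to drop the splitting entirely: verify from $\sigma<\sigma_*$ that $\bar k_{22}>0$ on all of $[r_0,r_1]$ and run the global elliptic argument, which is exactly what the smallness of the step buys.
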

\par As for helical symmetry, to the incompressible Navier-Stokes equations, Mahalov, Titi and Leibovich had already established the existence and uniqueness of global strong solution in invariant helical subspaces in \cite{MTL90}. Ettinger and Titi \cite{ET09} proved the existence and uniqueness of the weak solution of the three-dimensional unsteady Euler equations with helical symmetry in the absence of vorticity stretching. In \cite{BFNLT13}, the weak solutions of the three-dimensional incompressible flow equations with initial data admitting a one-dimensional symmetry group was investigated and the authors examined both the viscous and inviscid cases. Then Lopes Filho, Mazzucato, Niu, Nussenzveig Lopes and Titi \cite{FMNLT14} studied the limits of three-dimensional viscous and inviscid incompressible helical flows in an infinite circular pipe, with respectively no-slip and no-penetration boundary conditions, as the step approached infinity. They showed that, as the step became large, the three-dimensional helical flow approached a planar flow, which was governed by the so-called two-and-half Navier-Stokes and Euler equations, respectively. Recently, the authors \cite{KLW22} proved the existence of helical invariant weak solutions with finite Dirichlet integral to the steady Navier-Stokes equations with nonhomegeneous boundary condition on a non simply-connected helical symmetric domain, utilizing Leray's contradiction method and some important properties of helical symmetry.

The theory of transonic flows is closely connected with the studies of well-posedness theory for the mixed type PDEs, such as Tricomi's equation, Keldysh's equation, Von Karman equation and so on. Kuzmin \cite{K02} had established the well-posedness theory of a linear mixed type second order equation and applied it to the nonlinear perturbation problem of an accelerating smooth transonic irrotational basic flow in the potential and stream function place. Lately in \cite{WX13,WX16,WX19',WX21}, Wang and Xin have established the existence and uniqueness of continuous subsonic-sonic flow and regular transonic flows in the De Laval nozzles which is suitably flat at its throat, and the sonic points can locate only at the throat of the nozzle and the points on the nozzle wall with positive curvature. Weng and Xin studied smooth transonic flows with nonzero voricity in De Laval nozzles for a quasi two-dimensional steady Euler flow which is a generalization of the classical quasi one-dimensional model in \cite{WX23}. They established the structural stability of the smooth one-dimensional transonic flow with positive acceleration at the sonic point for the quasi two-dimensional steady Euler flow model under small perturbations of suitable boundary conditions, which implies the existence and uniqueness of a class of smooth transonic flows with nonzero vorticity and positive acceleration to the quasi two-dimensional model. Very recently, the authors in \cite{wz24a} proved the existence and uniqueness of smooth axisymmetric transonic irrotational flows with positive acceleration to the steady Euler equations with an external force.

In \cite{WXY21,WXY21'}, Weng, Xin and Yuan have investigated steady inviscid compressible transonic flows with radial symmetry in an annulus and given a complete classification of flow patterns in terms of boundary conditions at the inner and outer circle. Here we examine the structural stability of the background flow under suitable helical symmetry perturbations, therefore our problem simplifies to find helical symmetry transonic flows in a concentric cylinder satisfying suitable boundary conditions on the inner and outer cylinder respectively. We employ the deformation-curl decomposition introduced by Weng an Xin in \cite{WX19} to decouple the hyperbolic and elliptic modes. This decomposition has found numerous successful applications in transonic shock and smooth transonic flow problems, which can be found in recent works by Weng and Xin \cite{wx23a,WX23}. We find that the step determines the type of the first order PDE system for the radial and vertical velocity. There exists a critical number $\sigma_{*}$ depending only on the background transonic flows, such that if $0<\sigma<\sigma_{*}$, the radial and vertical velocity satisfy a first order elliptic system and one can prove the structural stability of steady transonic Euler flows under helically symmetric perturbation. The stability result for the large $\sigma$ case will be reported in a forthcoming paper.

This paper will be arranged as follows. In Section $2$, we introduce some key properties of background flows which will play a significant role in searching for the appropriate range of the period $\sigma$. In Section $3$, we consider the structural stability of the background flows within the class of helical symmetric flows. It will be shown that the quantities $V_1$ and $V_3$ satisfy a first order elliptic system when linearized around the background transonic flows. Meanwhile, $V_c$, $B$ and $A$ are conserved along the particle trajectory. The maximum principle and some suitable barrier functions are employed to derive some uniform estimates to second order elliptic equation.

\section{Some properties of the background flow}

A direct calculation shows that
\begin{eqnarray}\label{2.1}\begin{cases}
\bar{\rho}'(r)=\frac{\bar{M}_1^{2}+\bar{M}_2^{2}}{r(1-\bar{M}_1^{2})}\bar{\rho},\\
\bar{U}_1 '(r)=-\frac{1+\bar{M}_2^{2}}{r(1-\bar{M}_1^{2})}\bar{U}_1,\enspace \bar{U}_2 '(r)=-\frac{1}{r} \bar{U}_2,\\
(\bar{M}_1^{2})' (r)=\frac{\bar{M}_1^{2}}{r(\bar{M}_1^{2}-1)}(2+(\gamma-1)\bar{M}_1^{2}+(\gamma+1)\bar{M}_2^{2}),\\
(\bar{M}_2^{2})' (r)=\frac{\bar{M}_2^{2}}{r(\bar{M}_1^{2}-1)}(2+(\gamma-3)\bar{M}_1^{2}+(\gamma-1)\bar{M}_2^{2}),\\
(\left|\bar{{\bf M}} \right|^2)'(r)=\frac{\left|\bar{{\bf M}}\right|^2}{r(\bar{M}_1^{2}-1)}(2+(\gamma-1)\left|\bar{{\bf M}}\right|^2).\\
\end{cases}
\end{eqnarray}
Thus the total Mach number $\left|\bar{{\bf M}} (r)\right|$ of the background flow monotonically increases as $r$ decreases since $\bar{M}_1^2<1$ for any $r \in [r_0,r_1]$. The more concrete properties of the Mach numbers had been established in \cite{WXY21,WXY21',WZ22}.
\par For later use, we define
\begin{equation*}
\begin{split}
&\bar{A}_{11} (r)=c^{2}(\bar{\rho})-\bar{U}_1^{2}, \enspace \bar{A}_{31} (r) =\bar{A}_{13} (r)=-\frac{\sigma }{2\pi r}\bar{U}_1\bar{U}_2, \\
&\bar{A}_{33}(r)=(1+\frac{\sigma^2}{4 \pi^2 r^2})c^2(\bar{\rho})-\frac{\sigma^2 }{4 \pi^2 r^2}\bar{U}_2^{2},\\
&e_1(r)=\frac{c^{2}(\bar{\rho})}{r}+\frac{(\gamma+1)(1+\bar{M}_2^{2})}{r(1-\bar{M}_1^{2})}\bar{U}_1^2-\frac{\gamma-1}{r}\bar{U}_1^{2}>0,\enspace \forall r \in [r_0,r_1],\\
&e_3(r)=\frac{(\gamma-1)\sigma }{2\pi r^2}\frac{\left|\bar{{\bf M}} \right|^2}{1-\bar{M_1}^{2}}\bar{U}_1\bar{U}_2,\\
\end{split}
\end{equation*}
and
\begin{equation*}
\begin{split}
&f(r)=\frac{\sigma}{2 \pi}\int_{r_0}^{r} \frac{\bar{M}_1 \bar{M}_2 (\tau )}{1-\bar{M}_1^2 (\tau )} \frac{d\tau }{\tau },\\
&\bar{k}_1(r)=\frac{e_1(r)}{c^2(\bar{\rho})-\bar{U}_1^2}=\frac{1+2 \bar{M}_1^{2}+(\gamma+1)\bar{M}_1^{2}\left|\bar{{\bf M}} \right|^2 / (1-\bar{M}_1^{2})}{r(1-\bar{M}_1^{2})}>0,\\
&\bar{k}_2(r)=f''(r)+\frac{e_1(r)f'(r)+e_3(r)}{c^2(\bar{\rho})-\bar{U}_1^2},\\
&\bar{k}_{22}(r)=\frac{\bar{A}_{33}(r)+(\bar{A}_{13}+\bar{A}_{31})(r)f'(r)}{\bar{A}_{11}(r)}+(f'(r))^2.\\
\end{split}
\end{equation*}
\begin{proposition}\label{p21}
Let $(\bar{U}_1,\bar{U}_2,\bar{\rho},\bar{A})$ be the solution to the system \eqref{1.3} with \eqref{1.4}, then the following identities hold for any $r \in [r_0,r_1]$
\begin{equation*}
\begin{split}
&\bar{k}_2(r)=-\frac{\sigma}{2 \pi r^2} \frac{\bar{M}_{1}\bar{M}_{2}(2-2\bar{M}_{1}^2 +\bar{M}_{2}^2)}{(1-\bar{M}_{1}^2)^2},\\
&\bar{k}_{22}(r)=\frac{1}{1-\bar{M}_{1}^{2}}+\frac{\sigma^2}{4 \pi^2 r^2 }\frac{1-\left|\bar{{\bf M}} \right|^2}{(1-\bar{M}_{1}^{2})^2}.\\
\end{split}
\end{equation*}
\end{proposition}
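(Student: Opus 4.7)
The plan is to prove both identities by direct computation, starting from the definitions of $f$, $e_1$, $e_3$, $\bar{A}_{11}$, $\bar{A}_{13}$, $\bar{A}_{33}$ and using the explicit ODEs for $\bar{M}_1^{2}$ and $\bar{M}_2^{2}$ recorded in \eqref{2.1}. The only genuinely nontrivial step in each case is showing that the resulting algebraic expression collapses to the stated compact form, which is ultimately a matter of spotting a common factor of $1-\bar{M}_1^{2}$.

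For the second identity (on $\bar{k}_{22}$), I would first read off $f'(r)=\frac{\sigma}{2\pi r}\frac{\bar{M}_1\bar{M}_2}{1-\bar{M}_1^{2}}$ directly from the definition, and use $\bar{A}_{11}=c^{2}(\bar\rho)(1-\bar{M}_1^{2})$ to divide through. The three contributions then simplify separately: $\bar{A}_{33}/\bar{A}_{11}$ becomes $[1+(\sigma^{2}/(4\pi^{2}r^{2}))(1-\bar{M}_2^{2})]/(1-\bar{M}_1^{2})$, while $(\bar{A}_{13}+\bar{A}_{31})f'/\bar{A}_{11}$ and $(f')^{2}$ combine to $-\frac{\sigma^{2}}{4\pi^{2}r^{2}}\frac{\bar{M}_1^{2}\bar{M}_2^{2}}{(1-\bar{M}_1^{2})^{2}}$. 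Adding these and invoking the elementary identity $(1-\bar{M}_1^{2})(1-\bar{M}_2^{2})-\bar{M}_1^{2}\bar{M}_2^{2}=1-|\bar{\bf M}|^{2}$ produces the claim.

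For the first identity (on $\bar{k}_2$), I would compute $f''$ by logarithmic differentiation, writing $f''/f'=\bar{M}_1'/\bar{M}_1+\bar{M}_2'/\bar{M}_2-1/r+(\bar{M}_1^{2})'/(1-\bar{M}_1^{2})$ and substituting the formulas for $(\bar{M}_1^{2})'$ and $(\bar{M}_2^{2})'$ from \eqref{2.1}. This gives a rational expression in $\bar{M}_1,\bar{M}_2$ whose natural denominator is $r(1-\bar{M}_1^{2})^{2}$. I would then put $\bar{k}_1 f'$ (using the given factorization of $\bar{k}_1$) and $e_3/\bar{A}_{11}$ over the common denominator $2\pi r^{2}(1-\bar{M}_1^{2})^{3}$, so that $\bar{k}_2$ becomes $\frac{\sigma \bar{M}_1\bar{M}_2}{2\pi r^{2}(1-\bar{M}_1^{2})^{3}}$ times a polynomial in $\bar{M}_1^{2},\bar{M}_2^{2}$. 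The decisive algebraic step is recognizing that this polynomial equals $-2(1-\bar{M}_1^{2})^{2}-\bar{M}_2^{2}(1-\bar{M}_1^{2})=-(1-\bar{M}_1^{2})(2-2\bar{M}_1^{2}+\bar{M}_2^{2})$; after cancelling one factor of $1-\bar{M}_1^{2}$, the stated form of $\bar{k}_2$ appears.

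The main obstacle is purely bookkeeping: in the expansion of $f''+\bar{k}_1 f'+e_3/\bar{A}_{11}$, essentially all of the $\gamma$-dependent terms (the coefficients involving $(\gamma-1)\bar{M}_1^{4}$, $(\gamma+1)\bar{M}_1^{2}\bar{M}_2^{2}$, $(\gamma-2)\bar{M}_1^{2}$, etc.) must cancel against one another, leaving only a polynomial of total degree two in the Mach numbers with integer coefficients independent of $\gamma$. To control this I would assemble the contributions into a table indexed by the monomials $1,\bar{M}_1^{2},\bar{M}_2^{2},\bar{M}_1^{4},\bar{M}_1^{2}\bar{M}_2^{2}$ and verify coefficient-by-coefficient that every appearance of $\gamma$ drops out, leaving exactly the factored form above.
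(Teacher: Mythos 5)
Your proposal is correct and follows essentially the same route as the paper: a direct verification using $f'=-\bar{A}_{13}/\bar{A}_{11}$ and the background ODEs \eqref{2.1}, differing only in bookkeeping (you work with $\bar{M}_1^2,\bar{M}_2^2$ and logarithmic differentiation of $f'$, while the paper applies the quotient rule to $\bar{A}_{13}/\bar{A}_{11}$ and expands in $\bar{U}_1,\bar{U}_2,c^2$). I checked that the $\gamma$-dependent coefficients do cancel as you claim and that the resulting polynomial is $-2+4\bar{M}_1^{2}-\bar{M}_2^{2}-2\bar{M}_1^{4}+\bar{M}_1^{2}\bar{M}_2^{2}=-(1-\bar{M}_1^{2})(2-2\bar{M}_1^{2}+\bar{M}_2^{2})$, so both identities follow as stated.
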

\begin{proof}
Note that $f'(r)=-\frac{\bar{A}_{13}}{\bar{A}_{11}}$. Utilizing the formula \eqref{2.1}, one can calculate
\begin{eqnarray}\nonumber
&&\bar{k}_2(r)=\frac{1}{\bar{A}_{11}^2}(-e_{1}\bar{A}_{13}+e_3 \bar{A}_{11}-\bar{A}_{11}\bar{A}_{13}^{'}+\bar{A}_{13}\bar{A}_{11}^{'})\\\nonumber
&&=\frac{1}{\bar{A}_{11}^2}\bigg[\frac{\sigma}{2\pi r}\bar{U}_{1}\bar{U}_{2}\bigg(\frac{c^2(\bar{\rho})}{r}+\frac{(\gamma+1)(1+\bar{M}_2^2)}{r(1-\bar{M}_{1}^2)}\bar{U}_1^{2}-\frac{\gamma-1}{r}\bar{U}_1^{2}\bigg)\\\nonumber
&&\quad+\bar{A}_{11}\bigg(\frac{(\gamma-1)\sigma }{2\pi r^2}\frac{\left|\bar{{\bf M}} \right|^2}{1-\bar{M}_{1}^{2}}\bar{U}_{1}\bar{U}_{2}-\frac{\sigma}{2\pi r^2}\bar{U}_{1}\bar{U}_{2}+\frac{\sigma}{2\pi r}(\bar{U}_{1}^{'}\bar{U}_{2}+\bar{U}_{1}\bar{U}_{2}^{'})\bigg)\\\nonumber
&&\quad+\frac{\sigma}{2\pi r}\bar{U}_{1}\bar{U}_{2}\bigg((\gamma+1)\bar{U}_{1}\bar{U}_{1}^{'}+(\gamma-1)\bar{U}_{2}\bar{U}_{2}^{'}\bigg)\bigg]\\\nonumber
&&=-\frac{\sigma}{2 \pi r^2} \frac{\bar{U}_{1}\bar{U}_{2}}{\bar{A}_{11}^2}(2c^2(\bar{\rho})-2\bar{U}_{1}^{2}+\bar{U}_{2}^{2})=-\frac{\sigma}{2 \pi r^2} \frac{\bar{M}_{1}\bar{M}_{2}(2-2\bar{M}_{1}^2 +\bar{M}_{2}^2)}{(1-\bar{M}_{1}^2)^2}.
\end{eqnarray}
On the other hand, there holds
\begin{eqnarray}\nonumber
&&\bar{k}_{22}(r)=\frac{1}{\bar{A}_{11}^2}(\bar{A}_{11}\bar{A}_{33}-\bar{A}_{13}^{2})\\\nonumber
&&=\frac{1}{\bar{A}_{11}^2} \bigg[ \bigg( (1+\frac{\sigma^2}{4 \pi^2 r^2})c^2(\bar{\rho})-\frac{\sigma^2 }{4 \pi^2 r^2}\bar{U}_{2}^{2}\bigg)(c^{2}(\bar{\rho})-\bar{U}_{1}^{2})-\frac{\sigma^2 }{4 \pi^2 r^2}\bar{U}_{1}^{2} \bar{U}_{2}^{2} \bigg]\\\nonumber
&&=\frac{1}{\bar{A}_{11}^2} \bigg( c^2(\bar{\rho})(c^{2}(\bar{\rho})-\bar{U}_{1}^{2})+\frac{\sigma^2 }{4 \pi^2 r^2}c^2(\bar{\rho})(c^2(\bar{\rho})-\bar{U}_{1}^{2}-\bar{U}_{2}^{2}) \bigg)\\\nonumber
&&=\frac{1}{1-\bar{M}_{1}^{2}}+\frac{\sigma^2}{4 \pi^2 r^2 }\frac{1-\left|\bar{{\bf M}} \right|^2}{(1-\bar{M}_{1}^{2})^2}.
\end{eqnarray}
\end{proof}

\section{Smooth helically symmetric transonic flows with small nonzero vorticity}
In this section, we prove Theorem 1.3. To begin with, we borrow the idea developed in \cite{WX19,we19} to derive an equivalent system for the steady Euler system \eqref{1.8}.

With the definition \eqref{1.9}, we see
\begin{equation}\label{3.2}
V_2=\frac{1}{r}(V_c+\frac{\sigma}{2 \pi}V_3).
\end{equation}
\par First, one can identify the hyperbolic modes in the system \eqref{1.8}. By substituting the formula \eqref{3.2}, the Bernouli's quantity and the entropy are transported by the following equations
\begin{eqnarray}\label{eq314}
&&\left (V_1 \p_{r}+(\frac{\sigma}{2 \pi r^2} V_c+(1+\frac{\sigma^2}{4 \pi^2 r^2})V_3) \p_{\eta}\right )B=0,\\\label{eq315}
&&\left (V_1 \p_{r}+(\frac{\sigma}{2 \pi r^2} V_c+(1+\frac{\sigma^2}{4 \pi^2 r^2})V_3) \p_{\eta}\right )A=0.
\end{eqnarray}
It follows from the third and fourth equation in \eqref{1.8} that
\begin{equation}\label{eq312}
\left (V_1 \p_{r}+(\frac{\sigma}{2 \pi r^2} V_c+(1+\frac{\sigma^2}{4 \pi^2 r^2})V_3) \p_{\eta}\right )V_c=0.
\end{equation}
\par Next, we derive the equations for $V_1$ and $V_3$. It follows from the fourth equation in \eqref{1.8} that
\begin{equation*}
  (V_1 \p_{r}+(\frac{\sigma}{2 \pi r} V_2+V_3) \p_{\eta}) V_3 =-(\p_{\eta}B-V_1\p_{\eta}V_1-V_2\p_{\eta}V_2-V_3\p_{\eta}V_3-\frac{1}{\gamma-1}\rho^{\gamma-1} \p_{\eta}A),
\end{equation*}
replacing $V_2$ by \eqref{3.2} yields
\begin{equation}\label{eq313}
V_1(\p_{r}V_3-\p_{\eta}V_1)=-\p_{\eta}B+\frac{1}{r^2}(V_c+\frac{\sigma}{2 \pi} V_3)\p_{\eta}V_c+\frac{1}{\gamma-1}\rho^{\gamma-1}\p_{\eta}A.
\end{equation}
We have the local sonic speed
\begin{equation}\label{3.3}
c^2 (B, {\bf V} )=(\gamma-1)\left [  B-\frac{1}{2}\left ( V_1^2+\frac{1}{r^2}V_c^2+\frac{\sigma}{\pi r^2}V_c V_3+(1+\frac{\sigma^2}{4 \pi^2 r^2})V_3^2\right )\right ],
\end{equation}
and
\begin{equation}\label{3.4}
\begin{aligned}
\rho={\bigg(\frac{\gamma-1}{A \gamma}\bigg)}^{\frac{1}{\gamma-1}} {\left [  B-\frac{1}{2}\left ( V_1^2+\frac{1}{r^2}V_c^2+\frac{\sigma}{\pi r^2}V_c V_3+(1+\frac{\sigma^2}{4 \pi^2 r^2})V_3^2\right )\right ]}^{\frac{1}{\gamma-1}}.
\end{aligned}
\end{equation}
Then one can obtain
\begin{equation}\label{3.5}
\begin{aligned}
 &\frac{\partial \rho }{\partial B}=\frac{\rho}{c^2(\rho)},\enspace  \frac{\partial \rho }{\partial V_1}=-\frac{\rho}{c^2(\rho)}V_1,\enspace  \frac{\partial \rho }{\partial V_3}=-\frac{\rho}{c^2(\rho)}\bigg((1+\frac{\sigma^2}{4 \pi^2 r^2})V_3+\frac{\sigma}{2 \pi r^2}V_c \bigg),\enspace \\
 &\frac{\partial \rho }{\partial V_c}=-\frac{\rho}{c^2(\rho)}(\frac{1}{r^2}V_c+\frac{\sigma}{2\pi r^2}V_3),\enspace  \frac{\partial \rho }{\partial A}=-\frac{1}{\gamma-1}\frac{\rho}{A}.
\end{aligned}
\end{equation}
Therefore by \eqref{3.2},\eqref{3.4},\eqref{3.5}, the continuity equation in \eqref{1.8} can be transformed as
\begin{eqnarray}\nonumber
&&(c^2 (B, {\bf V} )-V_1^2) \p_{r}V_1-\frac{\sigma}{2 \pi r^2}V_1 V_c \p_{\eta}V_1-\frac{\sigma}{2 \pi r^2} V_1V_c \p_{r}V_3+\bigg((1+\frac{\sigma^2}{4 \pi^2 r^2})c^2 (B,{\bf V} )-\frac{\sigma^2}{4 \pi^2 r^4}V_c^2 \bigg)\p_{\eta}V_3\\\nonumber
&&+\frac{c^2 (B,{\bf V} )}{r}V_1
=(1+\frac{\sigma^2}{4 \pi^2 r^2})V_1V_3(\p_{\eta}V_1+\p_{r}V_3) +(1+\frac{\sigma^2}{4 \pi^2 r^2})\bigg(\frac{\sigma}{ \pi r^2}V_c+(1+\frac{\sigma^2}{4 \pi^2 r^2})V_3 \bigg)V_3\p_{\eta}V_3\\\label{eq311}
&& -\frac{\sigma}{2 \pi r^2}(c^2 (B,{\bf V} )-\frac{\sigma}{2 \pi r^2}V_3 V_c -\frac{V_c^2}{r^2})\p_{\eta}V_c - \frac{\sigma}{2 \pi r^2}V_c \p_{\eta}B +\frac{\sigma}{2 \pi r^2}\frac{c^2 (B, {\bf V} )}{(\gamma-1)A}V_c \p_{\eta}A,
\end{eqnarray}
and the left side terms are the principal part. Now we have the following lemma.
\begin{lemma}
($\textbf{Equivalence.}$) Assume that $C^1$ smooth vector functions $(\rho,{\bf u},A)$ defined on a domain $\mathbb{D}$ do not contain the vacuum (i.e. $\rho(r,\eta)>0$ in $\mathbb{D}$) and the radial velocity $V_1(r,\eta)<0$ in $\mathbb{D}$, then the following two statements are equivalent:
\begin{enumerate}[(i)]
  \item $(\rho,{\bf u},A)$ satisfy the system \eqref{1.8} in $\mathbb{D}$;
  \item $(V_1,V_3,V_c,B,A)$ satisfy the equation \eqref{eq314},\eqref{eq315},\eqref{eq312},\eqref{eq313} and \eqref{eq311} in $\mathbb{D}$.
\end{enumerate}
\end{lemma}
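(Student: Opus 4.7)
I view the lemma as a statement about an invertible change of dependent variables between $(\rho,V_1,V_2,V_3,A)$ and $(V_1,V_3,V_c,B,A)$. Under the standing assumptions $\rho>0$ and $V_1<0$ on $\mathbb{D}$, the relation \eqref{3.2} expresses $V_2$ smoothly in terms of $V_c,V_3$, and \eqref{3.4} expresses $\rho$ smoothly and positively in terms of $(B,V_1,V_3,V_c,A)$. Since both formulations consist of five scalar equations, it suffices to prove that every equation of each system is an algebraic/differential consequence of those in the other.

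For the direction $(i)\Rightarrow(ii)$, I plan to derive the five new equations in turn. Equation \eqref{eq315} is obtained by direct substitution of \eqref{3.2} into the entropy equation of \eqref{1.8}. The conservation law \eqref{eq312} follows by forming the combination $r\cdot[\text{third eq. of \eqref{1.8}}]-\frac{\sigma}{2\pi}\cdot[\text{fourth eq. of \eqref{1.8}}]$: the $\partial_\eta p$ contributions cancel, and the centrifugal and cross-terms reorganize into a transport of $V_c$. The Bernoulli transport \eqref{eq314} is the classical identity obtained by taking $\mathbf{u}\cdot(\text{momentum equations})$ and invoking the entropy equation, rewritten in the $(r,\eta)$ variables via \eqref{3.2}. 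To produce \eqref{eq313}, I start from the fourth equation of \eqref{1.8} and substitute the $\eta$-component identity
\[
\frac{1}{\rho}\partial_{\eta} p = \partial_{\eta}B - V_1\partial_{\eta}V_1 - V_2\partial_{\eta}V_2 - V_3\partial_{\eta}V_3 - \frac{1}{\gamma-1}\rho^{\gamma-1}\partial_{\eta}A,
\]
which follows from \eqref{3.3}; after expressing $V_2$ via \eqref{3.2}, the $\partial_\eta V_3$ cross-terms collapse and the $V_c$-derivatives appear with coefficient $r^{-2}(V_c+\frac{\sigma}{2\pi}V_3)$. Finally, \eqref{eq311} is obtained from the continuity equation by applying the chain rule to the composite function $\rho(B,V_1,V_3,V_c,A)$ through the derivative formulas \eqref{3.5} and then collecting terms, so that the principal (linear) part forms the left-hand side and the quadratic corrections are grouped on the right.

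For the reverse direction $(ii)\Rightarrow(i)$, each of these manipulations is reversible. The entropy equation follows from \eqref{eq315} by substituting \eqref{3.2} back. Undoing the replacement of $\frac{1}{\rho}\partial_\eta p$ in \eqref{eq313} recovers the $z$-momentum equation, and then $\frac{1}{r}\cdot[\text{\eqref{eq312}}]+\frac{\sigma}{2\pi r}\cdot[z\text{-mom.}]$ yields the $\theta$-momentum. The $r$-momentum is extracted from the purely algebraic identity
\[
V_1[r\text{-mom.}]+V_2[\theta\text{-mom.}]+V_3[z\text{-mom.}] = \mathbf{u}\cdot\nabla B - \frac{1}{\gamma-1}\rho^{\gamma-1}\mathbf{u}\cdot\nabla A,
\]
whose right-hand side vanishes by \eqref{eq314}--\eqref{eq315}; the second and third brackets on the left have just been shown to vanish, and since $V_1<0$ we conclude $[r\text{-mom.}]=0$. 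The continuity equation is recovered from \eqref{eq311} by inverting the chain-rule expansion, using $\rho>0$ to invert \eqref{3.4}--\eqref{3.5}.

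The main obstacle is not conceptual but computational: the forward derivation of \eqref{eq311} requires careful bookkeeping of the chain rule applied through the five partial derivatives in \eqref{3.5}, so that one correctly isolates the linear principal part (the left-hand side of \eqref{eq311}) from the numerous quadratic remainder terms in the velocity gradients. Conceptually, the only place where the hypothesis $V_1\ne 0$ is essential is the very last step of the reverse implication, where one divides the Bernoulli identity by $V_1$ to extract the radial momentum equation.
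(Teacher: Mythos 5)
Your proposal is correct and follows essentially the same route as the paper: the forward direction is the derivation already carried out before the lemma, and the reverse direction recovers the entropy, $z$-momentum, $\theta$-momentum, $r$-momentum and continuity equations in the same order, using the Bernoulli identity $\sum_i V_i[\text{mom}_i]=\mathbf{u}\cdot\nabla B-\frac{1}{\gamma-1}\rho^{\gamma-1}\mathbf{u}\cdot\nabla A$ and the hypothesis $V_1<0$ to extract the radial momentum equation, and the chain rule through \eqref{3.4}--\eqref{3.5} for continuity. Your version is in fact more explicit than the paper's (which only sketches these steps), and correctly pinpoints where $V_1\neq 0$ is needed.
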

\begin{flushleft}
$\mathit{Proof.}$ We have proved that Statement (i) implies Statement (ii). It remained to prove the converse. The fact that the last equation in \eqref{1.8} holds is clear. Define $\rho$ by \eqref{3.4}. A direct calculation shows that \eqref{eq313} is equivalent to the fourth equation in \eqref{1.8}. Then using \eqref{eq312} and \eqref{3.2}, one can verify that the third equation in \eqref{1.8} is satisfied. The above equations together with \eqref{eq314} imply that the second equation in \eqref{1.8} holds. Finally, according to \eqref{3.4}, the continuity equation in \eqref{1.8} follows directly from \eqref{eq311}, \eqref{3.2} and \eqref{3.5}.
\end{flushleft} $\hfill\square$
\par Set
\begin{equation*}
\begin{aligned}
W_1=V_1-\bar{U}_1,W_2=V_c-\kappa_2,W_3=V_3,W_4=A-\bar{A},W_5=B-\bar{B}.
\end{aligned}
\end{equation*}
\par Now let $\overrightarrow{\mathbf{W}}=\left ( W_1,W_2,W_3,W_4,W_5 \right )$, then $\overrightarrow{\mathbf{W}}$ satisfy
\begin{eqnarray}\label{3.6}\begin{cases}
\bar{A}_{11} \p_{r}W_1+\bar{A}_{31} \p_{\eta}W_1+\bar{A}_{13} \p_{r}W_3+ \bar{A}_{33} \p_{\eta}W_3+e_1W_1+e_3W_3=G_1 (\mathbf{W}),\\
\p_{r}W_3 -\p_{\eta}W_1=G_2 (\mathbf{W}),\\
\left((W_1+\bar{U}_1) \p_{r}+(\frac{\sigma}{2 \pi r^2} (W_2+\kappa_2)+(1+\frac{\sigma^2}{4 \pi^2 r^2})W_3) \p_{\eta}\right )W_2=0,\\
\left((W_1+\bar{U}_1) \p_{r}+(\frac{\sigma}{2 \pi r^2} (W_2+\kappa_2)+(1+\frac{\sigma^2}{4 \pi^2 r^2})W_3) \p_{\eta}\right )W_4=0,\\
\left((W_1+\bar{U}_1) \p_{r}+(\frac{\sigma}{2 \pi r^2} (W_2+\kappa_2)+(1+\frac{\sigma^2}{4 \pi^2 r^2})W_3) \p_{\eta}\right )W_5=0,
\end{cases}
\end{eqnarray}
where
\begin{equation}\label{3.7}
\begin{split}
&\bar{A}_{11} (r)=c^{2}(\bar{\rho})-\bar{U}_1^{2}, \bar{A}_{31}  (r) =\bar{A}_{13}  (r) =-\frac{\sigma \kappa_2}{2\pi r^2}\bar{U}, \bar{A}_{33}(r)=(1+\frac{\sigma^2}{4 \pi^2 r^2})c^2(\bar{\rho})-\frac{\sigma^2 \kappa_2^2}{4 \pi^2 r^4},\\
&e_1(r)=\frac{c^{2}(\bar{\rho})}{r}-(\gamma+1)\bar{U}_1\bar{U}_1^{'}-\frac{\gamma-1}{r}\bar{U}_1^{2},e_3(r)=-\frac{(\gamma-1)\sigma \kappa_2}{2\pi r^2}(\frac{1}{r}\bar{U}_1+\bar{U}_1^{'}),
\end{split}
\end{equation}
and
\begin{eqnarray}\label{3.8}
\begin{cases}
G_1 (\mathbf{W})=-\frac{\sigma}{2 \pi r^2} c^2(\mathbf{W})\p_{\eta}W_2+(1+\frac{\sigma^2}{4 \pi^2 r^2})(\bar{U}_1+W_1)W_3\p_{\eta}W_1\\
\quad+\frac{\sigma}{2 \pi r^2}(1+\frac{\sigma^2}{4 \pi^2 r^2})(\kappa_2+W_2) W_3\p_{\eta}W_3
\quad+(1+\frac{\sigma^2}{4 \pi^2 r^2})W_3 \bigg( (\bar{U}_1+W_1)\p_{r}+(1+\frac{\sigma^2}{4 \pi^2 r^2})W_3\p_{\eta} \bigg) W_3\\
\quad-\frac{\sigma (\kappa_2+W_2)}{2 \pi r^2}\bigg[\p_{\eta}W_5-\frac{c^2(\mathbf{W})\p_{\eta}W_4}{(\gamma-1)(A_0+W_4)}-(1+\frac{\sigma^2}{4 \pi^2 r^2})W_3\p_{\eta}W_3-\bigg(\frac{1}{r^2}(\kappa_2+W_2)+\frac{\sigma}{2 \pi r^2}W_3\bigg)\p_{\eta}W_2 \bigg]\\
\quad+\bar{U}_1' W_1^2-\bar{U}_1'J(\mathbf{W})-\frac{1}{r}\bar{U}_1 J(\mathbf{W})+\bigg((\gamma+1)\bar{U}_1W_1+(\gamma-1)\frac{\sigma \kappa_2}{2 \pi r^2}W_3+W_1^2-J(\mathbf{W}) \bigg)\p_{r}W_1 \\
\quad+\frac{1}{r} W_1\bigg((\gamma-1)\bar{U}_1 W_1+\frac{(\gamma-1)\sigma \kappa_2}{2 \pi r^2}W_3-J(\mathbf{W}) \bigg)\\
\quad+(1+\frac{\sigma^2}{4 \pi^2 r^2})\bigg((\gamma-1)\bar{U}_1 W_1+\frac{(\gamma-1)\sigma \kappa_2}{2 \pi r^2}W_3-J(\mathbf{W}) \bigg)\p_{\eta}W_3\\
\quad+\frac{\sigma}{2 \pi r^2}\bigg(\kappa_2 W_1 \p_{r}W_3+(\bar{U}_1+W_1)W_2\p_{r}W_3 \bigg)\\
\quad+\frac{\sigma}{2 \pi r^2}W_2\bigg((\bar{U}_1+W_1)\p_{\eta}W_1+\frac{\sigma}{2 \pi r^2}(\kappa_2+W_2)\p_{\eta}W_3 \bigg)+\frac{\sigma}{2\pi r^2}\kappa_2(W_1\p_{\eta}W_1+\frac{\sigma}{2 \pi r^2}W_2\p_{\eta}W_3 ),\\
G_2 (\mathbf{W})=\frac{1}{\bar{U}_1+W_1}\bigg\{- \p_{\eta}W_5+\frac{1}{r^2}(W_2+\kappa_2+\frac{\sigma}{2\pi}W_3)\p_{\eta}W_2\\
\quad+\frac{1}{\gamma(\bar{A}+W_4)}{\bigg[\bar{B}+W_5-\frac{1}{2}\big ( V_1^2+\frac{1}{r^2}V_c^2+\frac{\sigma}{\pi r^2}V_c V_3+(1+\frac{\sigma^2}{4 \pi^2 r^2})V_3^2\big)\bigg]}\p_{\eta}W_4\bigg\},\\
J(\mathbf{W})=(\gamma-1)\bigg[W_5-\frac{\kappa_2}{r^2}W_2-\frac{1}{2}\bigg(W_1^2+\frac{W_2^2}{r^2}+(1+\frac{\sigma^2}{4 \pi^2 r^2})W_3^2+\frac{\sigma}{\pi r^2}W_2W_3\bigg)\bigg].
\end{cases}
\end{eqnarray}

Clearly, $W_2$, $W_4$ and $W_5$ satisfy hyperbolic equations, while $W_1$, $W_3$ satisfy a first order system. We set out from resolving the hyperbolic equations, then it is easy to find that $W_2$, $W_4$ and $W_5$ are of same order as $O(\varepsilon)$. Under the proper conditions, the terms involving derivatives $\nabla W_i$ for $i=1,3$ in $G_1$ contains a small factor, thus the left side of the first two equations in \eqref{3.6} are the principal part.
\par Define the solution space as:
\begin{equation*}
\Lambda=\left\{\overrightarrow{\mathbf{W}} \in C^{2,\alpha }(\overline{\mathbb{D}_{\sigma}}):\left\| \overrightarrow{\mathbf{W}} \right\|_{C^{2,\alpha }( \overline{\mathbb{D}_{\sigma}})}\leq \delta_0\right\} ,\ \ \ \ D_{\sigma}=(r_0,r_1)\times \mathbb{T}_{\sigma}
\end{equation*}
with $\delta_0 >0$ to be specified later. For any $\overline{\mathbf{W}}\in \Lambda$, we construct an operator $ \mathcal{X}:\overline{\mathbf{W}}\in \Lambda \mapsto\mathbf{W}\in \Lambda$, where $\mathbf{W}$ is derived by the following steps.

First one obtains $(W_2,W_4,W_5)$ by solving the following hyperbolic problems:
\begin{eqnarray}\label{3.9}
\begin{cases}
\left ((\bar{W}_1+\bar{U}_1)\p_{r}+(\frac{\sigma}{2 \pi r^2}(\bar{W}_2+\kappa_2)+(1+\frac{\sigma^2}{4 \pi^2 r^2})\bar{W}_3) \p_{\eta}\right )(W_2,W_4,W_5)=0,\\
( W_2,W_4,W_5)( r_1,\eta  )=( \varepsilon q_2(\eta ),\varepsilon \widetilde{A}_1(\eta ) ,\varepsilon \widetilde{B}_1(\eta ) ).
\end{cases}
\end{eqnarray}

Since $\overline{\mathbf{W}}\in \Lambda$ and $\bar{W}_1+\bar{U}_1 <0$, the above transport equations can be solved by the characteristics method. Let $(\tau,\bar{\eta}(\tau;r,\eta))$ be the solution to the following ODE:
\begin{eqnarray}\label{3.10}
\begin{cases}
\frac{\mathrm{d} \bar{\eta}(\tau;r,\eta) }{\mathrm{d} \tau}=\frac {\frac{\sigma}{2 \pi r^2}(\bar{W}_2+\kappa_2)+(1+\frac{\sigma^2}{4 \pi^2 r^2})\bar{W}_3}{\bar{W}_1+\bar{U}_1}(\tau,\bar{\eta}(\tau;r,\eta)) ,\\
\bar{\eta}(r;r,\eta)=\eta.
\end{cases}
\end{eqnarray}

Note that $\overline{\mathbf{W}}\in C^{2,\alpha }(\overline{\mathbb{D}_{\sigma}})$, it is easy to see that $\bar{\eta}(r_1;r,\eta)\in C^{2,\alpha }(\overline{\mathbb{D}_{\sigma}})$. Indeed, we have
\begin{equation}\label{3.11}
\left\|\bar{\eta}(r_1;r,\eta) \right\|_{C^{2,\alpha }\left ( \overline{\mathbb{D}_{\sigma}} \right )} \leq C_1.
\end{equation}
Then we have
\begin{eqnarray}\label{3.12}
\begin{cases}
W_2 (r,\eta)= \varepsilon q_2(\bar{\eta}(r_1;r,\eta)) ,\\
W_4 (r,\eta)=\varepsilon \widetilde{A}_1(\bar{\eta}(r_1;r,\eta)),\\
W_5 (r,\eta)=\varepsilon \widetilde{B}_1(\bar{\eta}(r_1;r,\eta))
\end{cases}
\end{eqnarray}
and
\begin{equation}\label{3.13}
\left\| ( W_2,W_4,W_5)\right\|_{C^{2,\alpha }(\overline{\mathbb{D}_{\sigma}})}\leq C_2 \varepsilon.\\
\end{equation}

Next we will solve the boundary value problem to the linear first order system to obtain $(W_1,W_3)$:
\begin{eqnarray}\label{3.14}
\begin{cases}
\bar{A}_{11} \p_{r}W_1+\bar{A}_{31} \p_{\eta}W_1+\bar{A}_{13} \p_{r}W_3+ \bar{A}_{33} \p_{\eta}W_3+e_1W_1+e_3W_3=G_1(\bar{W}_1,\bar{W}_3,W_2,W_4,W_5),\\
\p_{r}W_3 -\p_{\eta}W_1=G_2 (\bar{W}_1,\bar{W}_3,W_2,W_4,W_5),\\
W_1 (r_0,\eta)=\varepsilon q_1 (\eta),\\
W_3 (r_1,\eta)=\varepsilon q_3 (\eta).
\end{cases}
\end{eqnarray}
Here $G_1$ and $G_2$ satisfy the formula in \eqref{3.8} and the estimates \eqref{3.13}, therefore one can verify directly that
\begin{eqnarray}\label{3.15}
\begin{cases}
\left\| G_1 (\bar{W}_1,\bar{W}_3,W_2,W_4,W_5)\right\|_{C^{1,\alpha }(\overline{\mathbb{D}_{\sigma}})}\leq C_0 (\varepsilon+\delta _{0}^{2}),\\
\left\| G_2 (\bar{W}_1,\bar{W}_3,W_2,W_4,W_5)\right\|_{C^{1,\alpha }(\overline{\mathbb{D}_{\sigma}})}\leq C_0 \varepsilon .\\
\end{cases}
\end{eqnarray}
Firstly, it is easy to show that there exists a unique solution $\phi_1(r,\eta)$ to the following problem:
\begin{eqnarray}\label{3.16}
\begin{cases}
(\p_{r}^{2}+  \p_{\eta}^{2})\phi_1 =G_2 (\bar{W}_1,\bar{W}_3,W_2,W_4,W_5)\in C^{1,\alpha }(\overline{\mathbb{D}_{\sigma}}),\\
\phi_1(r_0,\eta)=\p_{r} \phi_1(r_1,\eta)=0,\ \ \forall \eta\in \mathbb{T}_{\sigma}.
\end{cases}
\end{eqnarray}
Moreover, $\phi_1(r,\eta) \in C^{3,\alpha }(\overline{\mathbb{D}_{\sigma}})$ with the property that
\begin{equation}\label{3.17}
\left\| \phi_1 \right\|_{C^{3,\alpha }(\overline{\mathbb{D}_{\sigma}})} \leq C \left\| G_2 (\bar{W}_1,\bar{W}_3,W_2,W_4,W_5)\right\|_{C^{1,\alpha }(\overline{\mathbb{D}_{\sigma}})}\leq C_0 \varepsilon ,\\
\end{equation}
Consider $ \Phi_1=W_1+\p_{\eta} \phi_1$ and $ \Phi_3=W_3-\p_{r} \phi_1$. Then by \eqref{3.14} and \eqref{3.16}:
\begin{eqnarray}\label{3.18}
\begin{cases}
\bar{A}_{11} \p_{r}\Phi_1+\bar{A}_{31} \p_{\eta}\Phi_1+\bar{A}_{13} \p_{r}\Phi_3+ \bar{A}_{33} \p_{\eta}\Phi_3+e_1\Phi_1+e_3\Phi_3=G_3(\bar{W}_1,\bar{W}_3,W_2,W_4,W_5),\\
\p_{r}\Phi_3 -\p_{\eta}\Phi_1=0,\\
\Phi_1 (r_0,\eta)=\varepsilon q_1 (\eta),\\
\Phi_3 (r_1,\eta)=\varepsilon q_3 (\eta),\\
\end{cases}
\end{eqnarray}
where
\begin{equation*}
G_3 (\mathbf{W})=G_1 (\mathbf{W})-\bigg[\bar{U}_1^2 (r)+\frac{\sigma^2}{4\pi^2r^2}\bigg(c^2(\bar{\rho})-\bar{U}_2^2 (r)\bigg)\bigg] \p_{r\eta}^{2} \phi_1-\bar{A}_{13}(\p_{r}^{2} -\p_{\eta}^{2} )\phi_1 + e_1\p_{\eta} \phi_1- e_3  \p_{r} \phi_1.\\
\end{equation*}
Now the second equation in \eqref{3.18} implies that there exists a potential function $\phi (r,\eta)$ such that $\Phi_1=\p_{r} \phi$,$\Phi_3=\p_{\eta} \phi$ and $\phi$ satisfies the following second-order equation in $\mathbb{D}$:
\begin{eqnarray}\label{3.19}
\begin{cases}
\bar{A}_{11} \p_{r}^{2}\phi+2\bar{A}_{13}\p_{r\eta}^{2} \phi+ \bar{A}_{33}\p_{\eta}^{2}\phi+e_1 (r)\p_{r}\phi+e_3 (r)\p_{\eta}\phi=G_3(\bar{W}_1,\bar{W}_3,W_2,W_4,W_5),&in\enspace \mathbb{D}_{\sigma},\\
\p_{r} \phi (r_0,\eta)=\varepsilon q_1 (\eta),&\forall \eta \in \mathbb{T}_{\sigma},\\
\phi(r_1,\eta)=\varepsilon \int_{0}^{\eta} q_3 (s)d\eta,&\forall \eta \in \mathbb{T}_{\sigma}.
\end{cases}
\end{eqnarray}

Define a new coordinate $(y_1,y_2)$ as
\begin{equation*}
y_1=r,\enspace y_2=f(r)+\eta,\\
\end{equation*}
where
\begin{equation*}
f(r)=-\int_{r_0}^{r} \frac{\bar{A}_{13}(\tau )}{\bar{A}_{11}(\tau )} \frac{d\tau }{\tau }=\frac{\sigma}{2 \pi}\int_{r_0}^{r} \frac{\bar{M}_{1}\bar{M}_{2}(\tau )}{1-\bar{M}_{1}^2 (\tau )} \frac{d\tau }{\tau }.\\
\end{equation*}
Set the function $\widehat{\phi}(y_1,y_2)=\phi(y_1,y_2-f(y_1))$. Then \eqref{3.19} can be written as
\begin{eqnarray}\label{3.22}
\begin{cases}
  \sum_{i,j=1}^{2} \bar{k}_{ij}\p_{y_i y_j}^{2}\widehat{\phi}+\sum_{i=1}^{2} \bar{k}_{i}\p_{y_i}\widehat{\phi}=\widehat{G}(\bar{W}_1,\bar{W}_3),\\
\p_{y_1} \widehat{\phi} (r_0,y_2)+ f'(r_0) \p_{y_2}  \widehat{\phi} (r_0,y_2)= \varepsilon q_1 (y_2),\\
\widehat{\phi} (r_1,y_2)=\varepsilon \int_0^{y_2-f(r_1)}q_3 (s)ds,
\end{cases}
\end{eqnarray}
where $(y_1,y_2)\in(r_0,r_1)\times \mathbb{T}_{\sigma}$ and
\begin{equation*}
\begin{split}
&\bar{k}_{11}\equiv 1,\enspace \bar{k}_{12}(y_1)=\bar{k}_{21}(y_1)=\frac{\bar{A}_{13}(y_1)+\bar{A}_{11}(y_1)f'(y_1)}{\bar{A}_{11}(y_1)}\equiv 0,\\
&\bar{k}_{22}(y_1)=\frac{\bar{A}_{33}(y_1)+(\bar{A}_{13}+\bar{A}_{31})(y_1)f'(y_1)}{\bar{A}_{11}(y_1)}+(f'(y_1))^2,\\
&\bar{k}_{1}(y_1)=\frac{e_1(y_1)}{\bar{A}_{11}(y_1)},\enspace \bar{k}_{2}(y_1)=f''(y_1)+\frac{e_1(y_1)f'(y_1)+e_2(y_1)}{\bar{A}_{11}(y_1)},\\
&\widehat{G}(\bar{W}_1,\bar{W}_3)=\frac{G_3 (\mathbf{W})}{\bar{A}_{11}(y_1)},\enspace \int_{0}^{\sigma } q_3 (y_2) \mathrm{d} y_2= 0.
\end{split}
\end{equation*}
With the Propositions \ref{p11}, \ref{p21} and the formula \eqref{2.1}, there exists a critical value $\sigma_{*}\in (0, \infty )$ defined in \eqref{1.11}, such that for any $\sigma \in [0,\sigma_{*})$, $\bar{k}_{22}(y_1)>0$, $\forall y_1 \in [r_0,r_1]$.
Thus the equation in \eqref{3.22} is elliptic, by Theorem 1 in \cite{L86}, there exists a unique solution $\widehat{\phi} \in C^2(\mathbb{D}_{\sigma})\cap C(\overline{\mathbb{D}_{\sigma}})$ to \eqref{3.22}. To get an $L^{\infty}$ estimate, we define a barrier function $w(y_1,y_2)=\mathfrak{M}_1(\|\widehat{G}\|_{L^{\infty }}+ \|\widehat{q}_1\|_{L^{\infty }})y_1$, where the constant $\mathfrak{M}_1$ will be specified later. Then $\widehat{\phi} -w$ satisfies
\begin{eqnarray}\label{3.23}
\begin{cases}
\p_{y_1}^{2}(\widehat{\phi}-w)+ \bar{k}_{22} \p_{y_2}^{2}(\widehat{\phi} -w)+\bar{k}_{1} \p_{y_1}(\widehat{\phi} -w)+\bar{k}_{2}\p_{y_2}(\widehat{\phi} -w)\\
\quad\quad\quad\quad\quad=\widehat{G} - \mathfrak{M}_1 \bar{k}_{1}(\|\widehat{G}\|_{L^{\infty }}+ \|\widehat{q}_1\|_{L^{\infty }}),\\
\p_{y_1} (\widehat{\phi}-w )(r_0,y_2)+ f'(r_0)\p_{y_2} (\widehat{\phi}-w )(r_0,y_2)=\varepsilon q_1 (y_2)-\mathfrak{M}_1(\|\widehat{G}\|_{L^{\infty }}+ \|\widehat{q}_1\|_{L^{\infty }}),\\
(\widehat{\phi}-w )(r_1,y_2)=\varepsilon \int_0^{y_2-f(r_1)}q_3 (s)ds-\mathfrak{M}_1 r_1(\|\widehat{G}\|_{L^{\infty }}+ \|\widehat{q}_1\|_{L^{\infty }}).
\end{cases}
\end{eqnarray}
Since $\bar{k}_{1}(y_1)>0,\forall y_1 \in[r_0,r_1]$, we can choose $\mathfrak{M}_1<0$ such that
\begin{equation*}
\widehat{G} - \mathfrak{M}_1 \bar{k}_{1}(\|\widehat{G}\|_{L^{\infty }}+ \|\widehat{q}_1\|_{L^{\infty }})>0,\forall (y_1,y_2)\in \mathbb{D}_{\sigma};\\
\end{equation*}
\begin{equation*}
\widehat{q}_1 (y_2)-\mathfrak{M}_1(\|\widehat{G}\|_{L^{\infty }}+ \|\widehat{q}_1\|_{L^{\infty }})>0,\forall y_2 \in [0,\sigma].\\
\end{equation*}
Thus, by the maximum principle, it is easy to see that $\widehat{\phi} -w$ attains its maximum only on its boundary except $\left\{ (r_0,y_2):y_2\in [0,\sigma]\right\}$ and then
\begin{equation*}
\widehat{\phi} \leq C_3 (\|\widehat{G}\|_{L^{\infty }}+ \|\widehat{q}_1\|_{L^{\infty }} +\| \widehat{q}_3\|_{L^1(\mathbb{T}_{\sigma})} ),\\
\end{equation*}
where $C_3$ is a positive constant. Analogously, one can obtain a lower bound for $\widehat{\phi}$. Thus it holds that
\begin{equation}\label{3.24}
\| \widehat{\phi}\|_{L^{\infty }(\overline{\mathbb{D}_{\sigma}})}\leq C_2( \| \widehat{G}\|_{L^{\infty }} + \sum_{i=1,3}\| \widehat{q}_i\|_{L^{\infty }}+ \| \widehat{q}_3\|_{L^1(\mathbb{T}_{\sigma})} ).\\
\end{equation}
Employing \cite[Theorem 6.6 and Theorem 6.30]{GT98}, $\widehat{\phi}$ admits the estimate
\begin{equation}\label{3.25}
\| \widehat{\phi}\|_{C^{2,\alpha}(\overline{\mathbb{D}_{\sigma}})}\leq C_4(\| \widehat{G}\|_{C^{\alpha}(\overline{\mathbb{D}_{\sigma}})} + \sum_{i=1,3}\| \widehat{q}_i\|_{C^{1,\alpha}(\mathbb{T}_{\sigma})}+ \| \widehat{\phi}\|_{L^{\infty }(\overline{\mathbb{D}_{\sigma}})} ).\\
\end{equation}
Then by estimates \eqref{3.24} and \eqref{3.25}, we obtain the $C^{2,\alpha}$ estimate in $\mathbb{D}$
\begin{equation}\label{3.26}
\| \widehat{\phi}\|_{C^{2,\alpha}(\overline{\mathbb{D}_{\sigma}})}\leq C_5(\| \widehat{G}\|_{C^{\alpha}(\overline{\mathbb{D}_{\sigma}})} + \sum_{i=1,3}\| \widehat{q}_i\|_{C^{1,\alpha}(\mathbb{T}_{\sigma})}+ \|\widehat{q}_3\|_{L^1(\mathbb{T}_{\sigma})} ).
\end{equation}
Hence by transform $\widehat{\phi}(y_1,y_2)=\phi(y_1,y_2-f(y_1))$, there exists a unique solution $\phi$ to \eqref{3.19} and the following estimate holds:
\begin{equation}\label{3.27}
\| {\phi}\|_{C^{2,\alpha}(\overline{\mathbb{D}_{\sigma}})}\leq C_6(\| G_3\|_{C^{\alpha}(\overline{\mathbb{D}_{\sigma}})} + \varepsilon \sum_{i=1,3}\| q_i\|_{C^{1,\alpha}(\mathbb{T}_{\sigma})}+ \varepsilon\| q_3\|_{L^1(\mathbb{T}_{\sigma})} ).
\end{equation}
Therefore one has shown that
\begin{equation}\label{3.28}
\| (W_1,W_3)\|_{C^{2,\alpha}(\overline{\mathbb{D}_{\sigma}})}=\| (\p_{r} \phi-\p_{\eta} \phi_1,\p_{\eta} \phi+\p_{r} \phi_1)\|_{C^{2,\alpha}(\overline{\mathbb{D}_{\sigma}})}\leq C_{*}(\varepsilon +\delta _{0}^{2}).
\end{equation}
Set $\delta_0=2 C_{*} \varepsilon$ and select a $0<\varepsilon_0<\frac{1}{4 C_{*}^{2}}$ small enough such that $C_{*}(\varepsilon +\delta _{0}^{2})\leq \delta_0$. Thus one has derived a mapping $\mathcal{X}$ from $\Lambda$ to itself. Consider $\mathbf{\mathcal{W}},\tilde{\mathbf{\mathcal{W}}}\in \Lambda$, by a similar argument to the difference of $\mathbf{\mathcal{W}}$ and $\tilde{\mathbf{\mathcal{W}}}$, then it is easy to show that $\mathcal{X}$ is a contraction mapping in a $C^{1,\alpha}$ norm. Therefore there exists a unique fixed point $\mathbf{W}\in \Lambda $ to $\mathcal{X}$, which is the desired solution. This completes the proof.

\par {\bf Acknowledgement.} Weng is partially supported by National Natural Science Foundation of China 12071359, 12221001.

{\bf No Conflict of Interest.} The authors have no conflicts to disclose.

\end{document}